\def\includegraphics{}
\numberwithin{equation}{section}
\newtheorem{definition}{Definition}[section]
\newtheorem{theorem}{Theorem}[section]
\newtheorem{lemma}{Lemma}[section]
\newtheorem{remark}{Remark}[section]
\newtheorem{assumption}{Assumption}[section]
\newcommand{\ba}{\begin{array}}
\newcommand{\ea}{\end{array}}
\newcommand{\bal}{\begin{aligned}}
\newcommand{\eal}{\end{aligned}}
\newcommand{\bals}{\begin{aligned*}}
\newcommand{\eals}{\end{aligned*}}
\newcommand{\be}{\begin{equation}}
\newcommand{\ee}{\end{equation}}
\newcommand{\bes}{\begin{equation*}}
\newcommand{\ees}{\end{equation*}}
\newcommand{\bea}{\begin{eqnarray}}
\newcommand{\eea}{\end{eqnarray}}
\newcommand{\beas}{\begin{eqnarray*}}
\newcommand{\eeas}{\end{eqnarray*}}
\newcommand{\abs}[1]{\left| #1\right|}
\def\no{\noindent}
\def\ss{\smallskip}
\def \d {\delta}
\def \R {{\bf R}}
\def \esp {{[0,T]\times \R^m}}
\def \bp {{\bf{P}}}
\def \tx {(t,x)\in \esp}
\def \xs {X_s^{t,x}}
\def \xu {X_u^{t,x}}
\def \xso {X_s^{0,x}}
\def \xto {X_t^{0,x}}
\def \xTo {X_T^{0,x}}
\def \ito {It\^{o}}
\def \E {\textbf{E}}
\begin{document}

\begin{frontmatter}

\begin{fmbox}
\dochead{Research}


\title{One Kind of Multiple Dimensional Markovian BSDEs with Stochastic Linear Growth Generators}


\author[
   addressref={murui},                   
   noteref={m},                        
   email={rui.mu.sdu@gmail.com}   
]
{\inits{mr}\fnm{Rui} \snm{Mu}}
\author[
   addressref={wuzhen},
   corref={wuzhen},
   noteref={w},
   email={wuzhen@sdu.edu.cn}
]{\inits{wz}\fnm{Zhen} \snm{Wu}}


\address[id=murui]{%
  \orgname{School of Mathematics, Shandong University}, 
  \postcode{250100}                                
  \city{Jinan},                              
  \cny{P. R. China};
}
\address[id=wuzhen]{%
  \orgname{School of Mathematics, Shandong University},
  \postcode{250100}
  \city{Jinan},
  \cny{P. R. China}
}


\begin{artnotes}
\note[id=m]{Supported in part by the Natural Science Foundation for Young Scientists of Jiangsu Province, P.R. China (No. BK20140299)                               } 
\note[id=w]{Supported in part by National Natural Science Foundation of China (11221061 and 61174092), 111 project (B12023), the National Natural Science Fund for Distinguished Young Scholars of China (11125102).}
\end{artnotes}

\end{fmbox}


\begin{abstractbox}

\begin{abstract} 
In this article, we deal with a multiple dimensional coupled Markovian BSDEs system with stochastic linear growth generators with respect to volatility processes. An existence result is provided by using approximation techniques.
\end{abstract}


\begin{keyword}
\kwd{Backward Stochastic Differential Equations}
\kwd{Stochastic Differential Equations}
\kwd{Approximation Techniques}
\end{keyword}

\begin{keyword}[class=AMS]
\kwd[39A05]{}
\kwd[; 60H10]{}
\end{keyword}

\end{abstractbox}
%

\end{frontmatter}



\section{Introduction}\label{sec:intro}

Backward stochastic differential equations (BSDEs) was proposed firstly by Bismut (1973) \cite{bismut} in linear case to solve the optimal control problems. Later this notion was generalized by Pardoux and Peng (1990)  \cite{peng1990} into the general nonlinear form and the existence and uniqueness results were proved under the classical Lipschitz condition. A class of BSDE is also introduced by Duffie and Epstein (1992) \cite{de} in point of view of recursive utility in economics. During the past twenty years, BSDEs theory attracts many researchers' interests and has been fully developed into various directions. Among the abundant literature, we refer readers the florilegium book edited by El-Karoui and Mazliark (1997) \cite{el1997backward} for the early works before 1996. Surveys on BSDEs theory also includes \cite{el2008backward} which is written by El-Karoui, Hamad{\`e}ne and Matoussi collected in book (2009) \cite{carmona2009indifference}  (see Chapter 8) and the book by Yong and Zhou (1999) \cite{yong1999stochastic} (see Chapter 7).  Some applications on optimization problems can be found in \cite{el2008backward}. About Other applications such as in field of economics, we refer to El-Kaoui, Peng and Quenez (1997) \cite{karoui} . Recently, a complete review on BSDEs theory as well as some new results on nonlinear expectation are introduced in a survey paper by Peng (2011) \cite{peng2011}. 
  
One possible extension to the pioneer work of \cite{peng1990} is to relax as much as possible the uniform Lipschitz condition on the coefficient. A weaker hypothesis is presented by Mao (1995) \cite{mao} which we translate  as follows: for all $y,\bar y, z, \bar z$ and $t\in [0,T]$, the generator of the BSDE satisfies $|f(t,y,z)-f(t,\bar y, \bar z)|^2\leq \kappa |y-\bar y|^2+c|y-\bar y|^2$ a.s. where $c>0$ and $\kappa$ is a concave non-decreasing function from $\R^+$ to $\R^+$ such that $\kappa(0)=0,\ \kappa(u)>0$ for $u>0$ and $\int_{0^+}1/\kappa(u)du=\infty$. An  existence and uniqueness result is proved under such condition in \cite{mao}. Hamad\`ene introduced in (1996) \cite{hamadene1996equations}   a one-dimensional BSDE with local Lipschitz generator. Later Lepeltier and San Martin (1997) \cite{lepeltier1997backward} provided an existence result of minimum solution for one dimensional BSDE where the generator function $f$ is continuous and of linear growth in terms of $(y, z)$. When $f$ is uniformly continuous in $z$ with respect to $(\omega,t)$ and independent of $y$, a uniqueness result was obtained by Jia \cite{jia2008uniqueness}. BSDEs with polynomial growth generator is studied by Briand in \cite{briand2000bsdes}. The case of 1-dimensional BSDEs with coefficient which is monotonic in $y$ and non-Lipschitz on $z$ is shown in work \cite{briand2007one}. About the BSDE with continuous and quadratic growth driver, a classical research should be the one by Kobylanski \cite{kobylanski} (2000) which investigated a one-dimensional BSDE with driver $|f(t,y,z)\leq C(1+|y|+|z|^2)$ and bounded terminal value. This result was generated by Briand and Hu into the unbounded terminal value case in \cite{briand2006bsde} (2006). 

There are plenty works on one-dimensional BSDE. However, limited results have been obtained about the multi-dimensional case. We refer Hamad\`ene, Lepeltier and Peng \cite{hamadene1997} for an existence result on BSDEs system of Markovian case where the driver is of linear growth on $(y,z)$ and of polynomial growth on the state process. See Bahlali \cite{bahlali2001backward} \cite{bahlali2002existence} for high-dimension BSDE with local Lipschitz coefficient. 

In the present article, we consider a high dimensional BSDE under Markovian framework as follows:
\[
Y_t^i=g^i(X_T)+\int_t^TH_i(s, X_s, Y_s^1,...,Y_s^n,Z_s^1,...,Z_s^n)ds-\int_t^T Z_s^idB_s.
\]
for $i=1,2,...,n$, with process $X$ as a solution of a stochastic differential equation (SDE for short). For each $i=1,2,...,n$, the coefficient $H_i$ is continuous on $(y^1,...,y^n,z^1,...,z^n)$ and satisfies:
\[
|H_i(t,x,y^1,...,y^n,z^1,...,z^n)|\leq C(1+|x|)|z^i|+ C(1+|x|^{\gamma}+|y^i|),\quad \gamma>0.
\]
As the generator of the BSDE, actually, $H_i$ is of stochastic linear growth on $Z^i$, or in another word, it is of linear growth $\omega$ by $\omega$. Similar situation was considered in \cite{mu1} in the background of nonzero-sum stochastic differential game problem. However, in \cite{mu1}, the generator $H_i$ is independent on $(y^1,...,y^n)$. According to our knowledge, this general form of high dimensional coupled BSDEs system with stochastic linear growth generator has not been considered in literature. This is the main motivation of the present work.

The rest of this article is organized as follows: in Section 2, we give some notations ans assumptions on the coefficient. The properties of the forward SDE are also provided. The main existence result of BSDEs is proved in Section 3 where a measure domination result plays an important role. This domination result holds true when we assume that the diffusion process of the SDE satisfies the uniform elliptic condition.  For the proof of the main result, we adopt an approximation scheme following the well know mollify technique. The irregular coefficients are approximated by a sequence of Lipschiz functions. Then, we obtain the uniform estimates of the sequence of solutions as well as the convergence result in some appropriate spaces. Finally, we verify that the limit of the solutions is exactly the solution to the original BSDE which completes the proof.
\section{Notations and assumptions}\label{sec:state}

In this section, we will give some basic notations, the preliminary assumptions throughout this paper, as well as some useful results. Let $(\Omega, \mathcal{F}, \bf{P})$ be a probability space on which we
define a $m$-dimensional Brownian motion $B=(B_t)_{0\leq t\leq T}$ with integer $m\geq 1$. Let us denote by ${\bf{F}}=\{\mathcal{F}_t, 0\leq t\leq T\}$ for fixed $T>0$, the natural filtration generated by process $B$ and augmented by
$\mathcal{N}_{\bf{P}}$ the $\bf{P}$-null sets, \emph{i.e.}
$\mathcal{F}_{t}=\sigma\{B_{s},$\ $s\leq t\}\vee\mathcal{N}_{\bf{P}}%
$.

Let $\mathcal{P}$ be the $\sigma$-algebra on $[0,T]\times \Omega$ of
$\mathcal{F}_t$-progressively measurable sets. Let $p\in [1,\infty)$ be real constant and $t\in [0,T]$ be fixed, We then define the following spaces: 
$\mathcal{L}^p$ = $\{\xi: \mathcal{F}_t$-measurable and$\ \R^m$-valued random variable s.t. $\ \textbf{E}[|\xi|^p]<\infty\}$; 
 $\mathcal{S}_{t,T}^p$ = $\{\varphi=(\varphi_s)_{t\leq s \leq T}$: $\mathcal{P}$-measurable and $\R^m$-valued s.t. $\textbf{E}[\sup_{s\in [t,T]}|\varphi_s|^p]< \infty \}$ and 
 $\mathcal{H}_{t,T}^p=\{\varphi=(\varphi_s)_{t \leq s\leq T}: \mathcal{P}$-measurable and $\R^m$-valued s.t.  $\textbf{E}[(\int_{t}^T|\varphi_s|^2ds)^{\frac{p}{2}}]< \infty\}$. 
 Hereafter, $\mathcal{S}_{0,T}^p$ and $\mathcal{H}_{0,T}^p$ are simply denoted by $\mathcal{S}_T^p$ and $\mathcal{H}_{T}^p$. 
 
The following assumptions are in force throughout this paper. Let
$\sigma$ be the function defined as:
\[
\sigma: [0,T]\times \R^m \longrightarrow \R^{m\times
m}
\]
which satisfies the
following assumption.
\begin{assumption}\label{as:sigma}
   \begin{description}
      \item[(i)] $\sigma$ is uniformly Lipschitz w.r.t $x$. \emph{i.e.} there exists a constant $C_1$  such that,
  $\forall t \in [0, T],\  \forall x, x^{\prime} \in \R^m,\quad
| \sigma(t,x)-\sigma(t, x^{\prime})| \leq C_1 |x-
x^{\prime}|.$
      \item[(ii)] $\sigma$ is invertible and bounded and its inverse is bounded, \textit{i.e.}, there exits a constant $C_{\sigma}$ such that $\forall (t,x)\in \esp, \quad \abs {\sigma(t, x)} +\abs{\sigma^{-1}(t,x)}\leq C_{\sigma}.$
   \end{description}
\end{assumption}

\begin{remark} {\bf Uniform elliptic  condition.}\\
\noindent Under Assumptions \ref{as:sigma}, we can verify that, there exists a real constant $\epsilon>0$ such that for any $(t,x)\in \esp $,
\begin{equation}\label{eq:horm}
\epsilon.I\leq \sigma(t,x).\sigma^\top(t,x)\leq \epsilon^{-1}.I
\end{equation}
where $I$ is the identity matrix of dimension $m$. 
\end{remark}

Suppose that we have a system whose dynamic is described by a stochastic differential equation as follows: for $\tx$,
\be\label{eq:SDE sigma} 
\left\{ \bal
\xs &= x+ \int_t^s \sigma(u,\xu)dB_u%
,\ s\in\left[t,T\right]  ;\ss\\
\xs &= x,\ s\in [0,t]. \eal \right.
\ee

The solution $X= (\xs)_{s\leq T}$ exists and is unique under Assumption \ref{as:sigma}. (cf. Karatzas and Shreve 1991 \cite{KS}, p.289). We recall a well-known result associates to integrability of the solution. For any fixed $(t, x)\in [0,T]\times \R^m$, $p\geq 2$, it holds that, $\bp$-a.s.,
\begin{equation}\label{eq:est_x}
\E\left[\sup_{0\leq s\leq T}|X_s^{t, x}|^p\right]\leq C(1+|x|^p),
\end{equation}
where the constant $C$ is only depend on the Lipschitz coefficient and the bound of $\sigma$.  In addition, property \eqref{eq:est_x} holds true, as well, for the expectation under the probability which is equivalent to $\bp$.

For integer $n\geq 1$, we first present the following Borelian function as the terminal coefficient of the $n$-dimensional BSDE that we considering:
$$g^i:\R^m\longrightarrow \R, \ i=1,2,...,n.
$$
which satisfy
\begin{assumption}\label{as:hg}
      The function $g^i$, $i=1,2,...,n$, are of polynomial growth with respect to $x$, \emph{i.e.} %
  there exist constants $C_g$ and $\gamma\geq 0$ such that 
  $$ |g^i(x)|\leq C_g(1+|x|^{\gamma}),
\forall x\in \R^m , \text{ for } i=1,2,...,n.$$
\end{assumption}

Now, we consider Borelian functions $H_i, i=1,2,...,n$, from $[0,T]\times \R^m\times \R^n\times \R^{nm}$ into $\R$ as follows:
\[
H_i(t, x, y^1,...,y^n,z^1,...,z^n),\quad i=1,2,...,n
\]
which satisfy the following hypothesis:

\begin{assumption}\label{as:H}
\begin{description}
\item [(i)] For each $(t,x,y^1,...,y^n,z^1,...,z^n)\in [0,T]\times \R^m\times \R^n\times \R^{nm}$, there exists constants $C_2$, $C_h$ and $\gamma>0$, such that, for each $i=1,2...,n$,
\begin{equation}\label{eq:H}
|H_i(t,x,y^1,...,y^n,z^1,...,z^n)|\leq C_2(1+|x|)|z^i|+ C_h(1+|x|^{\gamma}+|y^i|) ;
\end{equation}

\item[(ii)] the mapping $(y^1,..., y^n, z^1,...,z^n)\in \R^n\times  \R^{nm}\longmapsto H_i(t,x,y^1,..., y^n,z^1,...,z^n) \in \R$ is continuous for any fixed $(t,x) \in [0,T]\times \R^m$.
\end{description}
\end{assumption}

The BSDE that we concern in this work is the following:
\begin{equation}\label{eq:mainbsde}
Y_t^i=g^i(\xTo)+\int_t^TH_i(s, \xso, Y_s^1,...,Y_s^n,Z_s^1,...,Z_s^n)ds-\int_t^T Z_s^idB_s.
\end{equation}
for $i=1,2,...,n$. From Assumptions \ref{as:hg} and \ref{as:H}, we know that this is a multiple dimensional coupled BSDEs system under Markovian framework, with unbounded terminal value and the generator of the $i_{th}$ equation is of linear growth on $Z^i$ component $\omega$ by $\omega$, or in another word, it is of stochastic linear growth.

\section{Existence of solutions for the multiple dimensional coupled BSDEs system}\label{sec:couple}

In this section, we will provide an existence result of BSDEs \eqref{eq:mainbsde} when $n=2$ as an example. Actually, the case for $n>2$ can be dealt with in the same way without any difficulties. 

\subsection{Measure domination}

Before we state our main theorem, let us first recall a result related to measure domination.
\begin{definition}\label{cdtdom}\textbf{: $L^{q}$-Domination condition}
\\
Let $q\in (1,\infty)$ be fixed. For a given $t_1\in [0,T]$, a family of probability measures
$\{\nu_1(s,dx), s\in [t_1, T]\}$ defined on $\R^m$ is said to be $L^{q}$-
dominated by another family of probability measures
$\{\nu_0(s,dx), s\in [t_1, T]\}$, if for any $\delta \in(0, T-t_1]$,
there exists an application $\phi^\d_{t_1}: [t_1+\d, T]\times \R^m \rightarrow
\R^+$ such that:
\\

(i) $\nu_1$(s, dx)ds= $\phi^\d_{t_1}$(s, x)$\nu_0$(s, dx)ds on $[t_1+\delta, T]\times$
  $\R^m$.
  
(ii) $\forall k\geq 1$, $\phi^\d_{t_1}(s,x) \in L^q([t_1+\delta, T]\times [-k, k]^m$; $\nu_0(s,
  dx)ds)$.
\end{definition}

\begin{lemma}\label{lem:dom}
Let $x_0\in \R^m$, $\tx$, $s\in (t,T]$ and 
$\mu(t,x;s,dy)$ the law of $X^{t,x}_s$, \textit{i.e.}, 
$$\forall A \in \mathcal{B}(\R^m),\,\,
\mu(t,x;s,A)= \bp(X_s^{t,x}\in A).$$ Under Assumption \ref{as:sigma} on $\sigma$, for any $q\in (1,\infty)$, the family 
of laws $\{\mu(t,x;s,dy), s\in [t,T]\}$ is $L^q$-dominated by 
$\{\mu(0,x_0;s,dy), s\in [t,T]\}$ for fixed $x_0\in \R^m$.
\end{lemma}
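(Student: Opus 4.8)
The plan is to reduce the $L^q$-domination condition to two-sided Gaussian bounds on the transition densities of the diffusion $X$. Under Assumption \ref{as:sigma}, the law of $\xs$ is governed by the non-divergence form parabolic operator $\mathcal{L}_t=\frac12\sum_{i,j}(\sigma\sigma^\top)_{ij}(t,x)\partial_{x_i}\partial_{x_j}$, whose coefficients $a_{ij}=(\sigma\sigma^\top)_{ij}$ are bounded, Lipschitz in $x$, and uniformly elliptic by \eqref{eq:horm}. Consequently, for every $s>t$ the measure $\mu(t,x;s,dy)$ admits a density $p(t,x;s,y)$ with respect to Lebesgue measure on $\R^m$, and likewise $\mu(0,x_0;s,dy)$ has density $p(0,x_0;s,y)$. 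The first step is to invoke the classical Aronson-type estimates for such operators: there exist constants $0<c_1\le c_2$ and $0<\lambda_1\le\lambda_2$, depending only on $m$, the Lipschitz constant and the bound $C_\sigma$, such that for all $0\le r<s\le T$ and $x,y\in\R^m$,
\begin{equation*}
\frac{c_1}{(s-r)^{m/2}}\exp\left(-\lambda_2\frac{|y-x|^2}{s-r}\right)\le p(r,x;s,y)\le \frac{c_2}{(s-r)^{m/2}}\exp\left(-\lambda_1\frac{|y-x|^2}{s-r}\right).
\end{equation*}

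With the densities in hand, for fixed $\tx$ and any $\d\in(0,T-t]$ I would set
\begin{equation*}
\phi^\d_t(s,y):=\frac{p(t,x;s,y)}{p(0,x_0;s,y)},\qquad (s,y)\in[t+\d,T]\times\R^m.
\end{equation*}
Since both families are absolutely continuous with respect to Lebesgue measure, we obtain at once $\mu(t,x;s,dy)\,ds=\phi^\d_t(s,y)\,\mu(0,x_0;s,dy)\,ds$, which is condition (i) of Definition \ref{cdtdom}; the denominator is strictly positive for every $s>0$ by the Gaussian lower bound, so $\phi^\d_t$ is well defined and finite pointwise.

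The remaining and principal point is the integrability condition (ii). Combining the upper bound for $p(t,x;\cdot,\cdot)$ with the lower bound for $p(0,x_0;\cdot,\cdot)$ gives, for $(s,y)\in[t+\d,T]\times\R^m$,
\begin{equation*}
\phi^\d_t(s,y)\le \frac{c_2}{c_1}\left(\frac{s}{s-t}\right)^{m/2}\exp\left(\lambda_2\frac{|y-x_0|^2}{s}-\lambda_1\frac{|y-x|^2}{s-t}\right).
\end{equation*}
On the region $[t+\d,T]\times[-k,k]^m$ we have $s-t\ge\d>0$ and $|y|\le k\sqrt{m}$, so the prefactor is bounded by $(T/\d)^{m/2}$ and the exponent by $\lambda_2(k\sqrt{m}+|x_0|)^2/\d$; hence $\phi^\d_t$ is bounded on this compact region by a constant $M_{k,\d}$. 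As $\mu(0,x_0;s,dy)$ is a probability measure, it follows that $\int_{t+\d}^T\!\int_{[-k,k]^m}|\phi^\d_t(s,y)|^q\,\mu(0,x_0;s,dy)\,ds\le M_{k,\d}^q\,(T-t-\d)<\infty$, establishing (ii) for every $q\in(1,\infty)$.

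I expect the main obstacle to be the justification of the two-sided Gaussian estimate itself: the generator is in non-divergence form, so one must either appeal to the Aronson--Fabes--Stroock bounds, valid once uniform ellipticity and measurability of $a_{ij}$ hold as guaranteed here, or exploit the Lipschitz regularity of $\sigma$ to construct the fundamental solution by the parametrix method and then derive the bounds. The $\d$-cutoff is essential precisely because $\phi^\d_t$ blows up as $s\downarrow t$, when $\mu(t,x;s,dy)$ concentrates at the point mass at $x$ while $\mu(0,x_0;s,dy)$ remains diffuse.
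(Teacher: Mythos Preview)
Your argument is correct. The paper does not supply its own proof of this lemma; it simply cites \cite{mu1}, Lemma 4.3 and Corollary 4.4. Your approach via two-sided Aronson-type Gaussian bounds on the transition density is precisely the standard route, and it is the one used in the cited reference as well: once the heat kernel $p(r,x;s,y)$ exists and obeys the two-sided Gaussian estimate, the Radon--Nikodym density $\phi^\d_t(s,y)=p(t,x;s,y)/p(0,x_0;s,y)$ is well defined for $s>t$, and boundedness on $[t+\d,T]\times[-k,k]^m$ follows exactly as you compute, giving the $L^q$-integrability against $\mu(0,x_0;s,dy)\,ds$ for every $q$.

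The only point that deserves a word of care, which you already flag, is that the operator $\frac12\sum(\sigma\sigma^\top)_{ij}\partial_i\partial_j$ is in non-divergence form. Aronson's original estimates are for divergence-form operators; here one either invokes the Fabes--Stroock extension (valid for bounded measurable uniformly elliptic $a_{ij}$) or, more simply, uses the Lipschitz regularity of $\sigma$ guaranteed by Assumption~\ref{as:sigma}(i) to run the parametrix construction of the fundamental solution, from which the Gaussian bounds follow directly. Either justification suffices under the standing assumptions, so there is no gap.
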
 

\begin{proof}
See \cite{mu1}, Lemma 4.3 and Corollary 4.4, pp. 14-15.
\end{proof}

\subsection{High dimensional coupled BSDEs system}
Our main result in this section is the following theorem.
\begin{theorem}\label{th:main}
Let $x\in \R^m$ be fixed. Then under Assumptions \ref{as:sigma},  \ref{as:hg} and \ref{as:H}, there exist two pairs of $\mathcal{P}$-measurable processes $(Y^i, Z^i)$ with values in $\R^{1+m}$,  $i=1,2,$ and two deterministic functions $\varsigma^i(t,x)$ which are of polynomial growth, i.e. $|\varsigma^i(t,x)|\leq C(1+|x|^{\gamma})$ with $\gamma\geq 0$, $i=1,2$ such that, 
\begin{equation}\label{eq:mainbsdeinth}
\left\{
\begin{aligned}
\bp&\text{-a.s.}, \forall t\leq T, Y_t^i=\varsigma^i(t,X_t^{0,x}) \text{ and } Z^i \text{is dt-square integrable } \bp\text{-a.s.};\\  
Y_t^1&=g^1(\xto)+\int_t^T H_1(s, \xso, Y_s^1,Y_s^2,Z_s^1,Z_s^2) ds-\int_t^T Z_s^1dB_s;\\
Y_t^2&=g^2(\xto)+\int_t^T H_2(s, \xso, Y_s^1,Y_s^2,Z_s^1,Z_s^2) ds-\int_t^T Z_s^2dB_s.\\
\end{aligned} 
\right.
\end{equation} 
The result holds true as well for case $i=n>2$ following the same way.
\end{theorem}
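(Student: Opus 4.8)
The plan is to construct the solution via a double approximation scheme that first regularizes the generators $H_i$ into globally Lipschitz functions, solves the resulting Markovian BSDE system by standard theory (Pardoux–Peng), and then passes to the limit using the uniform estimates together with the measure domination result of Lemma~\ref{lem:dom}. The key difficulty throughout is that the growth of $H_i$ in $z^i$ is only \emph{stochastic linear growth}, i.e.\ the growth coefficient $C_2(1+|x|)$ is unbounded because $|X_s^{0,x}|$ is unbounded; so one cannot directly invoke a bounded-growth existence theorem, and the usual a priori estimates must be carried out $\omega$ by $\omega$ using the Markovian structure $Y_t^i=\varsigma^i(t,X_t^{0,x})$.

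\medskip

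First I would mollify. For each $n$, replace $H_i$ by a Lipschitz truncation $H_i^n$ (for instance via convolution with a smooth mollifier and truncation in $x$, $y$, $z$) so that $H_i^n\to H_i$ locally uniformly while preserving the growth bound \eqref{eq:H} uniformly in $n$. Because $H_i^n$ is now Lipschitz in $(y^1,y^2,z^1,z^2)$, the coupled system
\[
Y_t^{i,n}=g^i(\xto)+\int_t^T H_i^n(s,\xso,Y_s^{1,n},Y_s^{2,n},Z_s^{1,n},Z_s^{2,n})\,ds-\int_t^T Z_s^{i,n}\,dB_s
\]
admits a unique solution $(Y^{i,n},Z^{i,n})\in\mathcal{S}_T^2\times\mathcal{H}_T^2$ with a deterministic representation $Y_t^{i,n}=\varsigma^{i,n}(t,\xto)$ by standard Markovian BSDE theory. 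The next step is to derive \textbf{uniform a priori estimates}: using the growth condition \eqref{eq:H}, Assumption~\ref{as:hg}, the integrability bound \eqref{eq:est_x}, and an Ito/Gronwall argument (handling the quadratic-looking $C_2(1+|x|)|z^i|$ term by Young's inequality and absorbing the $(1+|X|)^2$ weight into the exponential via \eqref{eq:est_x}), I would show $\sup_n\big(\E[\sup_{s\le T}|Y_s^{i,n}|^2]+\E[\int_0^T|Z_s^{i,n}|^2ds]\big)<\infty$, and moreover that the functions $\varsigma^{i,n}$ satisfy a uniform polynomial-growth bound $|\varsigma^{i,n}(t,x)|\le C(1+|x|^\gamma)$.

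\medskip

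The heart of the argument — and \textbf{the step I expect to be the main obstacle} — is the passage to the limit, because continuity of $H_i$ alone gives no compactness in the $Z$ component and the coupling prevents a monotone/comparison approach. This is exactly where the measure domination of Lemma~\ref{lem:dom} enters. The idea is that the laws $\mu(t,x;s,dy)$ of $X_s^{t,x}$ are $L^q$-dominated by the fixed reference law $\mu(0,x_0;s,dy)$; this domination lets one transfer convergence statements about the deterministic functions $\varsigma^{i,n}$ (which live on $[0,T]\times\R^m$ against the reference measure) into almost-sure and $\mathcal{H}_T^2$ convergence of the processes, upgrading weak compactness into genuine convergence of the nonlinear terms $H_i^n(s,\xso,\cdots)$. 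Concretely I would extract (Dunford–Pettis / diagonal) a subsequence along which $\varsigma^{i,n}\to\varsigma^i$ in an appropriate weighted space and $Z^{i,n}\rightharpoonup Z^i$ weakly in $\mathcal{H}_T^2$, use the domination to control the generator sequence on compact $x$-sets and the uniform polynomial growth to handle the tails, and thereby identify the limit of $\int_t^T H_i^n\,ds$ with $\int_t^T H_i\,ds$.

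\medskip

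Finally I would verify that the limit $(Y^i,Z^i)$, with $Y_t^i=\varsigma^i(t,\xto)$, solves \eqref{eq:mainbsdeinth}: the terminal condition is inherited from $g^i(\xTo)$, the stochastic integrals converge by the weak $\mathcal{H}_T^2$-convergence of $Z^{i,n}$ (tested against the martingale), the drift terms converge by the domination argument above, and the polynomial growth of $\varsigma^i$ is preserved in the limit from the uniform bound on $\varsigma^{i,n}$. The deterministic representation together with \eqref{eq:est_x} then also yields the stated square-integrability of $Z^i$, completing the proof for $n=2$; the case $n>2$ is identical component by component.
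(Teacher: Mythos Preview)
Your overall architecture---mollify, solve the Lipschitz systems with Markovian representation $Y^{i,n}_t=\varsigma^{i,n}(t,X_t^{0,x})$, derive uniform bounds, and pass to the limit using Lemma~\ref{lem:dom}---matches the paper's. But two of your key steps, as written, would not go through.

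\textbf{The a priori estimate.} An It\^o/Gronwall argument on $|Y^{i,n}|^2$ produces, after Young's inequality on $C_2(1+|X_s|)|Z_s||Y_s|$, a term $C(1+|X_s|)^2|Y_s^{i,n}|^2$, and the Gronwall factor is then $\exp\big(C\int_0^T(1+|X_s|)^2\,ds\big)$; estimate \eqref{eq:est_x} gives only polynomial moments of $X$, not exponential ones, so this bound is useless. The paper avoids this entirely: it compares $Y^{1n}$ with the solution $\bar Y^{1n}$ of a one-dimensional BSDE whose generator is the growth envelope $C_2(1+|\varphi_n(x)|)|z|+C_h(1+|\varphi_n(x)|^\gamma+|y|)$, and then removes the $|z|$-term by a \emph{Girsanov change of measure}. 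The crucial input is a result of Haussmann that the density $\mathcal{E}_T$ associated with the drift $C_2(1+|X_s|)\,\mbox{sign}(\bar Z_s)$ has a uniform $L^{p_0}$ bound for some $p_0>1$; combined with \eqref{eq:est_x} under the new measure and H\"older/Young, this yields $|\bar\varsigma^{1n}(t,x)|\le C(1+|x|^\lambda)$ uniformly in $n$, and the bound on $\varsigma^{1n}$ follows by BSDE comparison. From there one obtains $\E[\sup_s|Y_s^{in}|^\alpha]\le C$ for \emph{every} $\alpha\ge 1$, not just $\alpha=2$; this higher integrability is actually needed in the convergence step.

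\textbf{Passing to the limit in the generator.} Weak convergence $Z^{i,n}\rightharpoonup Z^i$ in $\mathcal{H}_T^2$ is not enough to identify the limit of $H_i^n(s,X_s,Y_s^{1,n},Y_s^{2,n},Z_s^{1,n},Z_s^{2,n})$: $H_i$ is merely continuous in $(y,z)$, so you need strong (or at least $dt\otimes d\bp$-a.e.) convergence of $Z^{i,n}$. The paper obtains \emph{strong} convergence in two moves. First, using the representation \eqref{eq:uin} and Lemma~\ref{lem:dom}, it shows that $(\varsigma^{1n}(t,x))_n$ is \emph{Cauchy for every fixed $(t,x)$}: one writes $\varsigma^{1n}(t,x)-\varsigma^{1m}(t,x)$ as an integral of $F_{1n}-F_{1m}$ against $\mu(t,x;s,dy)\,ds$, and the $L^{q/(q-1)}$-domination lets one replace $\mu(t,x;\cdot)$ by the reference law $\mu(0,x;\cdot)$ times an $L^{q/(q-1)}$ density, so the weak $L^q$-convergence of $F_{1n}$ against the reference measure handles the compact part, while the uniform $L^q$ bound on $F_{1n}$ controls the small-time and large-$|X|$ tails. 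This gives $Y^{1n}_t\to Y^1_t$ a.s.\ and then in $L^p$ for all $p$ by dominated convergence. Second, one applies It\^o to $|Y_t^{1n}-Y_t^{1m}|^2$ and uses the strong $L^p$ convergence of $Y$ (with $p$ as large as needed) to force $\limsup_{n,m}\E\int_0^T|Z_s^{1n}-Z_s^{1m}|^2\,ds\le C\varepsilon$ for arbitrary $\varepsilon$, i.e.\ $(Z^{1n})_n$ is Cauchy in $\mathcal{H}_T^2$. Only with this strong convergence of $(Y^{in},Z^{in})$ can the drift be identified with $H_i(s,X_s,Y_s^1,Y_s^2,Z_s^1,Z_s^2)$ via a pointwise-a.e.\ subsequence and dominated convergence.
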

\begin{proof}

The structure of this proof is as follows. We first use the mollify technique on the generator $H_i$, to construct a sequence of BSDEs with generators which are of  Lipschitz continuous. Then, we provide uniform estimates of the solutions, as well as the convergence property. Finally, we verify that the limits of the sequences are exactly the solutions for BSDE \eqref{eq:mainbsdeinth}.

\paragraph{Step 1.}\label{step1} Approximation.

Let $\xi$ be an element of $C^{\infty}(\R^{2+2m}, \R)$ with compact support and satisfies:
\[
\int_{\R^{2+2m}}\xi(y^1,y^2,z^1,z^2)dy^1dy^2dz^1dz^2=1.
\]
\noindent For $(t,x,y^1,y^2,z^1,z^2)\in [0,T]\times \R^m\times \R^{2+2m}$, we set,
\begin{align*}
\tilde{H}_{1n}(&t,x,y^1,y^2,z^1,z^2)\\
&=\int_{\R^{2+2m}}n^4 H_1(s, \varphi_n(x),p^1,p^2,q^1,q^2)\cdot \\
&\quad \xi\left(n(y^1-p^1),n(y^2-p^2),n(z^1-q^1),n(z^2-q^2)\right)dp^1dp^2dq^1dq^2,
\end{align*}
where the truncation function $\varphi_n(x)=((x_j\vee (-n))\wedge n)_{j=1,2,...,m}$, for $x=(x_j)_{j=1,2,...,m}\in \R^m$.
\no We next define $\psi\in C^{\infty}(\R^{2+2m},\R)$ by,
\begin{equation*}
\psi(y^1,y^2,z^1,z^2)=
\left\{
\begin{aligned}
1,\quad |y^1|^2+|y^2|^2+|z^1|^2+|z^2|^2\leq 1,\\
0,\quad |y^1|^2+|y^2|^2+|z^1|^2+|z^2|^2\geq 4.
\end{aligned}
\right.
\end{equation*}
\no Then, we define the measurable function sequence $(H_{1n})_{n\geq 1}$ as follows:\\
 $\forall (t,x,y^1,y^2,z^1,z^2)\in [0,T]\times \R^m\times\R^{2+2m}$,
\[
H_{1n}(t,x,y^1,y^2,z^1,z^2)=\psi(\frac{y^1}{n},\frac{y^2}{n},\frac{z^1}{n},\frac{z^2}{n})\tilde{H}_{1n}(t,x,y^1,y^2,z^1,z^2)
\]
We have the following properties:
\begin{equation}\label{abcd}
\left\{
\begin{array}[c]{l}%
\ (a)\  H_{1n}\  \text{is uniformly lipschitz w.r.t}\ (y^1,y^2,z^1,z^2);\smallskip\\
\ (b)\  |H_{1n}(t,x,y^1,y^2,z^1,z^2)|\leq C_2(1+|\varphi_n(x)|)|z^1|+C_h(1+|\varphi_n(x)|^{\gamma}+|y^1|); \smallskip\\
\ (c)\ |H_{1n}(t,x,y^1,y^2,z^1,z^2)|\leq c_n, \ \text{for any}\  (t,x,y^1,y^2,z^1,z^2);\smallskip\\
\ (d)\ \text{For any}\  (t,x)\in [0,T]\times \R^m,\text{ and }\textbf{K}\ \text{a compact subset of}\  \R^{2+2m},\\
\qquad \sup\limits_{(y^1,y^2,z^1,z^2)\in \textbf{K}}\abs{H_{1n}(t,x,y^1,y^2,z^1,z^2)-H_1(t,x,y^1,y^2,z^1,z^2)}\rightarrow 0,\\
 \qquad \text{as} \ n\rightarrow \infty.
\end{array}
\right.
\end{equation}

The construction of the approximating sequence $(H_2^n)_{n\geq 1}$ is carried out in the same way.

For each $n\geq 1$ and $(t,x)\in [0,T]\times \R^m$, since $H_{1n}$ and $H_{2n}$ are uniformly lipschitz w.r.t $(y^1,y^2,z^1,z^2)$, by the result of Pardoux-Peng  (see \cite{peng1990}), we know, there exist two pairs of processes $(\bar{Y}^{in;(t,x)}, \bar{Z}^{in;(t,x)})\in \mathcal{S}_{t,T}^2(\R)\times \mathcal{H}_{t,T}^{2}(\R^m)$, $i=1,2$, which satisfy, for $s\in[t,T]$,
\begin{equation}\label{eq:BSDEseq}
\left\{
\begin{aligned}
Y_s^{1n;(t,x)}&=g^1(X_T^{t,x})+\int_s^T H_{1n}(r,X_r^{t,x}, Y_r^{1n;(t,x)},Y_r^{2n;(t,x)},Z_r^{1n;(t,x)},Z_r^{2n;(t,x)})dr\\
&\qquad \qquad \quad-\int_s^T Z_r^{1n;(t,x)}dB_r;\smallskip\\
Y_s^{2n;(t,x)}&=g^2(X_T^{t,x})+\int_s^T H_{2n}(r,X_r^{t,x}, Y_r^{1n;(t,x)},Y_r^{2n;(t,x)},Z_r^{1n;(t,x)},Z_r^{2n;(t,x)})dr\\
&\qquad \qquad \quad-\int_s^T Z_r^{2n;(t,x)}dB_r.
\end{aligned}
\right.
\end{equation}
\no Meanwhile, the properties \eqref{abcd}-(a),(c) and the result of El-karoui et al. (ref. \cite{karoui}) yield that, there exist two sequences of deterministic measurable applications $\varsigma^{1n}(resp.\ \varsigma^{2n}): [0,T]\times \R^m \rightarrow \R$ and $\mathfrak{z}^{1n}(resp.\  \mathfrak{z}^{2n}): [0,T]\times \R^m\rightarrow \R^m$ such that for any $s\in [t,T]$,
\begin{equation}\label{eq:varsigma}
Y_s^{1n;(t,x)}= \varsigma^{1n}(s,X_s^{t,x})\quad (resp.\  Y_s^{2n;(t,x)}=\varsigma^{2n}(s,X_s^{t,x}))
\end{equation}
and
\[
Z_s^{1n;(t,x)}= \mathfrak{z}^{1n}(s,X_s^{t,x}) \quad (resp.\  Z_s^{2n;(t,x)}=\mathfrak{z}^{2n}(s,X_s^{t,x})).
\]
Besides, we have the following deterministic expression: for $i=1,2,$ and $n\geq 1$,
\begin{equation}\label{eq:uin}
\varsigma^{in}(t,x)= \E\Big[g^i(X_T^{t,x})+\int_t^T F_{in}(s,X_s^{t,x})ds\Big],\quad \forall(t,x)\in [0, T]\times \R^m,
\end{equation}
\no where,
\[
F_{in}(s,x)= H_{in}(s,x,\varsigma^{1n}(s,x),\varsigma^{2n}(s,x),\mathfrak{z}^{1n}(s,x),\mathfrak{z}^{2n}(s,x)).
\]

\paragraph{Step 2.}\label{step2} Uniform integrability of $(Y^{1n;(t,x)}, Z^{1n;(t,x)})_{n\geq 1}$ for fixed $(t,x)\in [0,T]\times \R^m$.

For each $n\geq 1$, let us first consider the following BSDE:
\begin{align}\label{eq:BSDEbar}
\bar {Y}_s^{1n}&=g^1(X_T^{t,x})+\int_t^T C_2(1+|\varphi_n (X_r^{t,x})|)|\bar Z_r^{1n}| \nonumber\\
\qquad \qquad &+C_h(1+|\varphi_n(X_r^{t,x})|^\gamma +
|\bar Y_r^{1n}|)dr -\int_s^T \bar Z_r^{1n} dB_r.
\end{align}
For any $x\in \R^m$ and $n\geq 1$, the mapping $(y^1,z^1)\in \R\times \R^m \mapsto C_2(1+|\varphi_n (X_r^{t,x})|)|z^1| +C_h(1+|\varphi_n(X_r^{t,x})|^\gamma + |y^1|)$ is Lipschitz continuous, therefore, the solution $(\bar Y^{1n}, \bar Z^{1n}) \in \mathcal{S}_{t,T}^2(\R)\times \mathcal{H}_{t,T}^2(\R^m)$ exists and is unique. Moreover, it follows from the result of El-Karoui et al. (see \cite{karoui}) that, $\bar Y^{1n}$ can be characterized through a deterministic measurable function $\bar \varsigma^{1n}: [0,T]\times \R^m \rightarrow \R$, that is, for any $s\in [t,T]$,
\begin{equation}\label{eq:varsigmabar}
\bar {Y}_s^{1n}=\bar \varsigma^{1n}(s, X_s^{t,x}).
\end{equation}
Next let us consider the process 
\[
B^n_s= B_s- \int_0^s 1_{[t,T]}(r)C_2(1+|\varphi_n(X_r^{t,x})|)\mbox{sign}(\tilde{Z}_r^{1n})dr,\,\, 0\leq s\leq T,
\]
which is, thanks to Girsanov's Theorem, a Brownian motion under the probability $\bp^n$ on $(\Omega, \mathcal{F})$ whose density with respect to $\bp$ is
$\mathcal{E}_T:=\mathcal{E}_T(\int_0^T C_2(1+|\varphi_n(X_s^{t,x})|)\mbox{sign}(\bar{Z}_s^{1n})1_{[t,T]}(s) dB_s),$
where for any $z=(z^i)_{i=1,...,d}\in \R^m$, $\mbox{sign}(z)=(1_{[|z^i|\neq 0]}\frac{z^i}{|z^i|})_{i=1,...,d}$ and  $\mathcal{E}_t(\cdot)$ is
defined by 
\begin{equation}\label{eq:density fun}
\mathcal{E}(M):= (\exp\{M_t- \langle M \rangle_t/2\})_{t\leq T},
\end{equation}
for any $(\mathcal{F}_t, \bp)$-continuous local martingale $M=(M_t)_{t\leq T}$. Then \eqref{eq:BSDEbar} becomes 
\begin{equation}
\bar {Y}_s^{1n}=g^1(X_T^{t,x})+\int_t^T C_h(1+|\varphi_n(X_r^{t,x})|^\gamma + |\bar Y_r^{1n}|)dr -\int_s^T \bar Z_r^{1n} dB_r^n, \quad s\in [t,T].
\end{equation}
Then, by using \ito's formula to $e^{C_h t}\bar Y_t^{1n}$, $t\leq T$, we know, 
\begin{align*}
e^{C_h t}\bar Y_t^{1n}= e^{C_h T} g^1(X_T^{t,x})-\int_t^T C_he^{C_h r}(1+|\varphi_n(X_r^{t,x})|^\gamma)dr+\int_t^T e^{C_h r} \bar Z_r^{1n}dB_r^n.
\end{align*}
Considering \eqref{eq:varsigmabar}, we have, 
\begin{equation*}
\bar \varsigma^{1n}(t,x)=e^{-C_h t}\E^n\left[e^{C_h T}g^1(X_T^{t,x})+\int_t^T C_he^{C_h s}(1+|\varphi_n(X_s^{t,x})|^\gamma)ds\Big|\mathcal{F}_t\right],
\end{equation*}
where $\E^n$ is the expectation under the probability $\bp^n$. Taking the expectation on both sides under the probability $\bp^n$ and taking account of $\bar \varsigma^{1n}$ is deterministic, we obtain,
\[
\bar \varsigma^{1n} (t,x)= e^{-C_h t}\E^n\left[e^{C_h T}g^1(X_T^{t,x})+\int_t^T C_he^{C_h s}(1+|\varphi_n(X_s^{t,x})|^\gamma)ds\right].
\]
Since, $g^1$ is of polynomial growth, and $e^{-C_ht}\leq 1$ for $t\in [0,T]$, we infer that, $\forall (t,x)\in [0,T]\times\R^m$,
\begin{align*}
|\bar \varsigma^{1n}(t,x)|
&\leq C \E^n\left[\sup_{t\leq s\leq T} \left(1+|X_s^{t,x}|^\gamma \right) \right]\\
&\leq C \E\left[\mathcal{E}_T \cdot  \sup_{t\leq s\leq T} \left(1+|X_s^{t,x}|^\gamma \right)\right],
\end{align*}
where the constant $C$ depends only on $T,\  C_h$ and $C_g$. It follows from a result of Haussmann (\cite{Haussmann1986}, p.14, see also \cite{mu1}, Lemma 3.1) that, there exists some $p_0\in (1,2)$ and a constant $C$ which is independent on $n$, such that, $\E[|\mathcal{E}_T|^{p_0}]\leq C$ uniformly. As a result of Young's inequality and estimate \eqref{eq:est_x}, we have,
\begin{equation*}
|\bar \varsigma^{1n}(t,x)|
\leq C_{p_0}\{ \E\left[ |\mathcal{E}_T|^{p_0} \right] + \E\left[  \sup_{t\leq s\leq T} \left(1+|X_s^{t,x}|^{\frac{p_0\gamma}{p_0-1}} \right)\right]\},
\end{equation*}
which yields,
\begin{equation}
|\bar \varsigma^{1n}(t,x)| \leq C(1+|x|^\lambda) \text{ with } \lambda= p_0 \gamma/(p_0-1)>2.
\end{equation}
Next by the comparison theorem of BSDEs and property \eqref{abcd}-(b), we actually have, for any $s\in [t,T]$,
\[
Y_s^{1n;(t,x)}=\varsigma^{1n}(s, X_s^{t,x})\leq \bar Y_s^{1n}= \bar{\varsigma}^{1n}(s, X_s^{t,x}),
\]
and by choosing $s=t$, we get that $\varsigma^{1n}(t,x)\leq C(1+|x|^\lambda)$, $(t,x)\in [0,T]\times \R^m$. In a similar way, we can also show that for any $(t,x)\in [0,T]\times \R^m$, $\varsigma^{1n}(t,x)\geq -C(1+|x|^\lambda)$. As a conclusion, $\varsigma^{1n}$ is of polynomial growth on $(t,x)$ uniformly in $n$, i.e. there exists constants $C$ which is independent of $n$, and $\lambda>2$ such that,
\begin{equation}\label{eq:varsigmagrowth}
|\varsigma^{1n}(t,x)|\leq C(1+|x|^\lambda).
\end{equation}
Combining \eqref{eq:varsigmagrowth} and \eqref{eq:est_x}, we deduce that, for any $\alpha>1$, $i=1,2$, 
\begin{equation}\label{eq:estY1n}
\E\left[\sup_{t\leq s\leq T} |Y_s^{in;(t,x)}|^\alpha\right]\leq C.
\end{equation}
On the other hand, by applying \ito's formula with $(Y^{in;(t,x)})^2$ and considering the uniform estimate \eqref{eq:estY1n}, we can infer in a regular way that, for any $t\in [0,T]$, $i=1,2$,
\begin{equation}\label{eq:estZ1n}
\E\left[\int_t^T |Z_s^{in;(t,x)}|^2ds\right]\leq C.
\end{equation}
\paragraph{Step 3.} There exists a subsequence of $((Y_s^{1n;(0,x)},Z_s^{1n;(0,x)})_{0\leq s\leq T})_{n\geq 1}$ which converges respectively to $(Y^1_s, Z^1_s)_{0\leq s\leq T}$, solution of the BSDE \eqref{eq:mainbsdeinth}. 
Let us recall the expression \eqref{eq:uin} for case $i=1$, and apply property \eqref{abcd}-(b) combined with the uniform estimates \eqref{eq:estY1n}, \eqref{eq:estZ1n}, \eqref{eq:est_x} and the Young's inequality to show that, for $1<q<2$,
\begin{equation}\label{eq:F1nlp}
\begin{aligned}
\textbf{E}&\bigg[\int_{0}^T|F_{1n}(s,\xso)|^qds\bigg]\\
&\leq C\textbf{E}\left[\int_{0}^T(1+|\varphi_n(\xso)|)^q|\bar{Z}_s^{1n;(0,x)}|^q+(1+|\varphi_n(\xso)|^{\gamma q}+|\bar{Y}_s^{1n;(0,x)}|^q )ds\right]\\
&\leq C\{\textbf{E}\left[\int_{0}^T |\bar{Z}_s^{1n;(0,x)}|^2ds\right]+ \textbf{E}\left[ \sup_{0\leq s\leq T}|\bar{Y}_s^{1n;(0,x)} |^q\right]+1\}\\
&\leq C.
\end{aligned}
\end{equation}
Therefore, there exists a subsequence $\{n_k\}$ (for notation simplification, we still denote it by $\{n\}$) and a $\mathcal{B}([0, T])\otimes \mathcal{B}(\R^m)$-measurable deterministic function $F_1(s,y)$ such that:
\begin{equation}\label{eq:Fn weakly cov}
F_{1n}\rightarrow F_1 \ \text{weakly in}\ \mathcal{L}^q([0,T]\times \R^m; \mu(0,x;s,dy)ds).
\end{equation}
Next we aim to prove that $(\varsigma^{1n}(t,x))_{n\geq 1}$ is a Cauchy
sequence for each $(t,x)\in[0,T]\times \R^m$. Now let $(t,x)$ be fixed, $\eta>0$, $k$, $n$ and $m\geq1$ be integers. From \eqref{eq:uin}, we have,
\begin{equation*}
\begin{aligned}
|\varsigma^{1n}(t,x)-\varsigma^{1m}(t,x)|&=
\Big|\textbf{E}\Big[\int_t^TF_{1n}(s,X_s^{t,x})-F_{1m}(s,X_s^{t,x})ds\Big]\Big|\\
&\leq
\textbf{E}\Big[\int_t^{t+\eta}\left|F_{1n}(s,X_s^{t,x})-F_{1m}(s,X_s^{t,x})\right|ds\Big]\\
&\quad +\Big|\textbf{E}\Big[\int_{t+\eta}^T\left(F_{1n}(s,X_s^{t,x})-F_{1m}(s,X_s^{t,x})\right).1_{\{X_s^{t,x}|\leq
k\}}ds\Big]\Big|\\
&\quad +\Big|\textbf{E}\Big[\int_{t+\eta}^T\left(F_{1n}(s,X_s^{t,x})-F_{1m}(s,X_s^{t,x})\right).1_{\{|X_s^{t,x}|>k\}}ds\Big]\Big|,
\end{aligned}
\end{equation*}
where on the right side, noticing \eqref{eq:F1nlp}, we obtain,
\begin{equation*}
\begin{aligned}
\textbf{E}&\Big[\int_t^{t+\eta}|F_{1n}(s, X_s^{t,x})- F_{1m}(s,
X_s^{t,x})|ds\Big]\\
& \leq
\eta^{\frac{q-1}{q}}\{\textbf{E}\Big[\int_0^T|F_{1n}(s,X_s^{t,x})-F_{1m}(s,X_s^{t,x})|^qds\Big]\}^{\frac{1}{q}}\leq C\eta^{\frac{q-1}{q}}.
\end{aligned}
\end{equation*}
At the same time, Lemma \ref{lem:dom} associates with the $\mathcal{L}^{\frac{q}{q-1}}$-domination property implies:
\begin{equation*}
\begin{aligned}
\Big|\textbf{E}\Big[\int_{t+\eta}^T&\left(F_{1n}(s,X_s^{t,x})-F_{1m}(s,X_s^{t,x})\right).1_{\{\abs{X_s^{t,x}}\leq
k\}}ds\Big]\Big|\\
&=
\Big|\int_{\R^m}\int_{t+\eta}^T(F_{1n}(s,\eta)-F_{1m}(s,\eta)).1_{\{\abs{\eta}\leq
k\}}\mu(t,x;s,d\eta)ds\Big|\\
&=
\Big|\int_{\R^m}\int_{t+\eta}^T(F_{1n}(s,\eta)-F_{1m}(s,\eta)).1_{\{\abs{\eta}\leq
k\}}\phi_{t,x}(s,\eta)\mu(0,x;s,d\eta)ds\Big|.
\end{aligned}
\end{equation*}
Since $\phi_{t,x}(s,\eta) \in \mathcal{L}^{\frac{q}{q-1}}([t+\eta,
T]\times [-k, k]^m$; $\mu(0,x; s, d\eta)ds)$, for $k\geq 1$, it
follows from \eqref{eq:Fn weakly cov} that for each $(t,x)\in [0, T]\times \R^m$, we have,
\begin{equation*}
\textbf{E}\Big[\int_{t+\eta}^T\left(F_{1n}(s, X_s^{t,x})-F_{1m}(s,
X_s^{t,x})\right)\mathbbm{1}_{\{\abs{X_s^{t,x}}\leq
k\}}ds\Big]\rightarrow 0\text{ as } n,m\rightarrow \infty.
\end{equation*}
\noindent Finally,
\begin{equation*}
\begin{aligned}
\Big|\textbf{E}&\Big[\int_{t+\eta}^T\left(F_{1n}(s,X_s^{t,x})-F_{1m}(s,X_s^{t,x})\right).\mathbbm{1}_{\{\abs{X_s^{t,x}}>
k\}}ds\Big]\Big|\\
&\leq C\{\textbf{E}\Big[\int_{t+\eta}^T
\mathbbm{1}_{\left\{\abs{X_s^{t,x}}>k\right\}}ds\Big]\}^{\frac{q-1}{q}}\{\textbf{E}\Big[\int_{t+\eta}^T\abs{F_{1n}(s,X_s^{t,x})-F_{1m}(s,X_s^{t,x})}^qds\Big]\}^{\frac{1}{q}}\\
&\leq Ck^{-\frac{q-1}{q}}.
\end{aligned}
\end{equation*}
Therefore, for each $(t,x)\in [0,T]\times \R^m$, $(\varsigma^{1n}(t,x))_{n\geq 1}$ is a Cauthy sequence and then there
exists a borelian application $\varsigma^1$ on $[0,T]\times \R^m$, such that for each $(t,x)\in [0,T]\times \R^m$, $\lim_{n\rightarrow \infty}\varsigma^{1n}(t,x)=\varsigma^1(t,x)$, which indicates that for $t\in [0,T]$, $\lim_{n\rightarrow \infty} Y_t^{1n;(0,x)}(\omega)=\varsigma^1(t,\xto),\quad \bp-a.s.$ Taking account of \eqref{eq:estY1n} and Lebesgue dominated convergence theorem, we obtain the sequence $((Y_t^{1n;(0,x)})_{0\leq t\leq T})_{n\geq 1}$ converges to $Y^1=(\varsigma^1(t, \xto))_{0\leq t\leq T}$ in $\mathcal{L}^p([0,T]\times\R^m)$ for any $p\geq 1$, that is:
\begin{equation}\label{conv ytn}
\textbf{E}\Big[\int_{0}^T|Y_t^{1n;(0,x)}-Y_t^1|^p dt\Big]\rightarrow 0, \quad \text{as}\ n\rightarrow \infty.
\end{equation}
Next, we will show that for any $p>1$, $Z^{1n;(0,x)}=(\mathfrak{z}^{1n}(t,\xto))_{0\leq t\leq T})_{n\geq 1}$ has a limit in $\mathcal{H}_{T}^2(\R^m)$. Besides, $(Y^{1n;(0,x)})_{n\geq 1}$ is convergent in $\mathcal{S}_{T}^2(\R)$ as well.
We now focus on the first claim. For $n,m \geq 1$ and $0 \leq t\leq T$, using It\^{o}'s formula with $(Y_t^{1n}-Y_t^{1m})^2$ (we omit the subscript $(0,x)$ for convenience) and considering \eqref{abcd}-(b), we get,
\begin{align*}
&\big|Y_t^{1n}-Y_t^{1m}\big|^2+ \int_t^T |Z_s^{1n}-Z_s^{1m}|^2ds\\
&= 2\int_t^T(Y_s^{1n}-Y_s^{1m})(H_{1n}(s,
X_s^{0,x},Y_s^{1n},Y_s^{2n},Z_s^{1n},Z_s^{2n})-\\
&-H_{1m}(s, X_s^{0,x},Y_s^{1m},Y_s^{2m},Z_s^{1m},Z_s^{2m}))ds-2\int_t^T(Y_s^{1n}-Y_s^{1m})(Z_s^{1n}-Z_s^{1m})dB_s\\
&\leq C\int_t^T
\abs{Y_s^{1n}-Y_s^{1m}}\Big[(\abs{Z_s^{1n}}+\abs{Z_s^{1m}})(1+|X_s^{0,x}|)\\
&\qquad +|Y_s^{1n}|+|Y_s^{1m}|+ (1+|X_s^{0,x}|)^{\gamma}\Big]ds- 2\int_t^T(Y_s^{1n}-Y_s^{1m})(Z_s^{1n}-Z_s^{1m})dB_s.
\end{align*}
\no Since for any $x,y,z\in \R$, $|xyz|\leq \frac{1}{a}|x|^a+\frac{1}{b}|y|^b+\frac{1}{c}|z|^c$ with $\frac{1}{a}+\frac{1}{b}+\frac{1}{c}=1$, then, for any $\varepsilon>0$, we have,

\begin{equation}\label{4.27}
\begin{aligned}
&|Y_t^{1n}-Y_t^{1m}|^2+\int_t^T|Z_s^{1n}-Z_s^{1m}|^2ds\\
&\leq C\Big\{\frac{\varepsilon^2}{2}\int_t^T(|Z_s^{1n}|+|Z_s^{1m}|)^2ds+\frac{\varepsilon^4}{4}\int_t^T(1+|X_s^{0,x}|)^4ds\\
&\qquad \quad+\frac{1}{4\varepsilon^8}\int_t^T|Y_s^{1n}-Y_s^{1m}|^4ds+\frac{\varepsilon}{2}\int_t^T(|Y_s^{1n}|+|Y_s^{1m}|)^2ds\\
&\qquad \quad+\frac{\varepsilon}{2}\int_t^T(1+|X_s^{0,x}|)^{2\gamma} ds+\frac{1}{2\varepsilon}\int_t^T|Y_s^{1n}-Y_s^{1m}|^2ds\Big\}\\
&\quad-2\int_t^T(Y_s^{1n}-Y_s^{1m})(Z_s^{1n}-Z_s^{1m})dB_s.
\end{aligned}
\end{equation}
\no Taking now $t=0$ in \eqref{4.27}, expectation on both sides and the limit w.r.t. $n$ and $m$, we deduce that, 
\begin{equation}\label{4.29}
\limsup_{n,m\rightarrow \infty} \textbf{E}\Big[\int_{0}^T|Z_s^{1n}-Z_s^{1m}|^2ds\Big]\leq C\{\frac{\varepsilon^2}{2}+\frac{\varepsilon^4}{4}+\frac{\varepsilon}{2}\},
\end{equation}
due to \eqref{eq:estZ1n}, \eqref{eq:est_x} and the convergence of \eqref{conv ytn}. As $\varepsilon$ is arbitrary, then the sequence $(Z^{1n})_{n\geq 1}$ is convergent in $\mathcal{H}_{T}^2$ to a process $Z^1$.

Now, returning to inequality \eqref{4.27}, taking the supremum over $[0,T]$ and using BDG's inequality, we obtain that, 
\begin{align*}
&\textbf{E}\Big[\sup_{0\leq t\leq T}|Y_t^{1n}-Y_t^{1m}|^2+\int_{0}^T|Z_s^{1n}-Z_s^{1m}|^2ds\Big]\\
&\leq C\{\frac{\varepsilon^2}{2}+\frac{\varepsilon^4}{4}+\frac{\varepsilon}{2}\}+\frac{1}{4}\textbf{E}\Big[\sup_{0\leq t\leq T}|Y_t^{1n}-Y_t^{1m}|^2\Big]+4\textbf{E}\Big[\int_{0}^T|Z_s^{1n}-Z_s^{1m}|^2ds\Big]
\end{align*}
which implies that
$$
\limsup_{n,m\rightarrow \infty}\textbf{E}\Big[\sup_{0\leq t\leq T}|Y_t^{1n}-Y_t^{1m}|^2\Big]=0,
$$
since $\varepsilon$ is arbitrary and \eqref{4.29}. Thus, the sequence of $(Y^{1n})_{n\geq 1}$ converges to $Y^1$ in $\mathcal{S}_{T}^2$ which is a continuous process.

Finally, repeat the procedure for player $i=2$, we have also the convergence of $(Z^{2n})_{n\geq 1}$ (resp. $(Y^{2n})_{n\geq 1}$) in $\mathcal{H}_{T}^2$ (resp. $\mathcal{S}_{T}^2$) to $Z^2$ (resp. $Y^2=\varsigma^2(.,  X^{0,x}.)$). 

\paragraph{Step 4.} The limit process $(Y_t^i,Z_t^i)_{0\leq t\leq T}, i=1,2$ is the solution of BSDE \eqref{eq:mainbsdeinth}.

Indeed, we need to show that (for case $i=1$):
\[
F_1(t,\xto)=H_1(s, \xso, Y_s^1,Y_s^2,Z_s^1,Z_s^2)\quad dt\otimes d\bp- a.s.
\]
\no For $k\geq 1$, we have, 
\begin{align} \label{G}
&\textbf{E}\bigg[\int_{0}^T |H_{1n}(s, \xso, Y_s^{1n},Y_s^{2n},Z_s^{1n},Z_s^{2n})- H_1(s, \xso, Y_s^1,Y_s^2,Z_s^1,Z_s^2)|ds\bigg]\nonumber\\
&\leq \textbf{E}\bigg[\int_{0}^T |H_{1n}(s, \xso, Y_s^{1n},Y_s^{2n},Z_s^{1n},Z_s^{2n}) \nonumber\\
&\qquad - H_1(s, \xso, Y_s^{1n},Y_s^{2n},Z_s^{1n},Z_s^{2n})|\cdot 1_{\{ |Y_s^{1n}|+|Y_s^{2n}|+|Z_s^{1n}|+|Z_s^{2n}|<k\}}ds\bigg]\nonumber\\
&\quad +\textbf{E}\bigg[\int_{0}^T |H_{1n}(s, \xso, Y_s^{1n},Y_s^{2n},Z_s^{1n},Z_s^{2n}) \nonumber\\
&\qquad- H_1(s, \xso, Y_s^{1n},Y_s^{2n},Z_s^{1n},Z_s^{2n})| \cdot 1_{\{ |Y_s^{1n}|+|Y_s^{2n}|+|Z_s^{1n}|+|Z_s^{2n}|\geq k\}}ds\bigg]\nonumber\\
&\quad +\textbf{E}\bigg[\int_{0}^T |H_1(s, \xso,Y_s^{1n},Y_s^{2n},Z_s^{1n},Z_s^{2n})- H_1(s, \xso, Y_s^1,Y_s^2,Z_s^1,Z_s^2)|ds\bigg]\nonumber\\
&:= I_1^n+I_2^n+I_3^n.
\end{align}
\no The sequence $I_1^n$, $n\geq 1$ converges to 0. On one hand, for $n\geq 1$, the point \eqref{abcd}-(b) in Step 1 implies that,
\begin{align*}
&|H_{1n}(s, \xso, Y_s^{1n},Y_s^{2n},Z_s^{1n},Z_s^{2n}) - H_1(s, \xso,  Y_s^{1n},Y_s^{2n},Z_s^{1n},Z_s^{2n})|\\
&\cdot 1_{\{ |Y_s^{1n}|+|Y_s^{2n}|+|Z_s^{1n}|+|Z_s^{2n}|<k\}}\nonumber\\
& \leq C_2(1+|\xso|)k+C_h(1+|\xso|^{\gamma}+k).
\end{align*}
\no On the other hand, considering the point \eqref{abcd}-(d), we obtain that, 
\begin{align*}
&|H_{1n}(s, \xso, Y_s^{1n},Y_s^{2n},Z_s^{1n},Z_s^{2n}) - H_1(s, \xso,  Y_s^{1n},Y_s^{2n},Z_s^{1n},Z_s^{2n})|\\
&\cdot 1_{\{ |Y_s^{1n}|+|Y_s^{2n}|+|Z_s^{1n}|+|Z_s^{2n}|<k\}}\nonumber\\
&\leq\quad \sup\limits_{\substack{(y_s^1,y_s^2,z_s^1,z_s^2)\\|y_s^1|+|y_s^2|+|z_s^1|+|z_s^1|<k}}|H_{1n}(s, \xso, y_s^{1},y_s^{2},z_s^{1},z_s^{2})- H_1(s, \xso, y_s^{1},y_s^{2},z_s^{1},z_s^{2})|\\
&\rightarrow 0,
\end{align*} 
as $n\rightarrow \infty$. Thanks to Lebesgue's dominated convergence theorem, the sequence $I_1^n$ of (\ref{G}) converges to 0 in $\mathcal{H}_{T}^1$.

\no The sequence $I_2^n$ in \eqref{G} is bounded by $\frac{C}{k^{2(q-1)/q}}$ with $q\in (1,2)$. Actually, from point \eqref{abcd}-(b) and Markov's inequality, for $q\in (1,2)$, we get,
\begin{align*}
I_2^n &\leq C\Big\{\textbf{E}\Big[\int_{0}^T(1+|\xso|)^q|Z_s^{1n}|^q+(1+|\xso|^{\gamma}+|Y_s^{1n}|)^qds\Big]\Big\}^{\frac{1}{q}}\times\\
&\qquad\qquad\qquad\times \Big\{\textbf{E}\Big[\int_{0}^T 1_{\{|Y_s^{1n}|+|Y_s^{2n}|+|Z_s^{1n}|+|Z_s^{2n}|\geq k\}}ds\Big]\Big\}^{\frac{q-1}{q}}\\
&\leq  C\Big\{\textbf{E}\Big[\int_{0}^T|Z_s^{1n}|^2ds\Big]+\textbf{E}\Big[\int_{0}^T (1+|\xso|)^{\frac{2q}{2-q}}ds\Big]\\
&\qquad\qquad\qquad+\textbf{E}\Big[\int_{0}^T|Y_s^{1n}|^2ds\Big]+\textbf{E}\Big[\int_{0}^T(1+|\xso|)^{\gamma q}ds\Big]\Big\}^{\frac{1}{q}}\times \\
&\quad\times \frac{\Big\{\textbf{E}\Big[\int_{0}^T|Y_s^{1n}|^2+|Y_s^{2n}|^2+|Z_s^{1n}|^2+|Z_s^{2n}|^2ds\Big]\Big\}^{\frac{q-1}{q}}}{(k^2)^{\frac{q-1}{q}}}\\
&\leq \frac{C}{k^{\frac{2(q-1)}{q}}}.
\end{align*}
\no The last inequality is a straightforward result of the estimates (\ref{eq:est_x})(\ref{eq:estY1n}) and (\ref{eq:estZ1n}). 
\medskip

The third sequence $I_3^n$, $n\geq 1$ in \eqref{G} also converges to $0$, at least for a subsequence. Actually, since the sequence $(Z^{1n})_{n\geq 1}$ converges to $Z^1$ in $\mathcal{H}_{T}^2$, then there exists a subsequence $(Z^{1n_k})_{k\geq 1}$ such that it converges to $Z^1$, $dt\otimes d\bp$-a.e., and furthermore, $\sup_{k\geq 1}|Z_t^{1n_k}(\omega)|\in \mathcal{H}_{T}^2$. On the other hand, $(Y^{1n_k})_{k\geq 1}$ converges to $Y^1>0$, $dt\otimes d\bp$-a.e.. Thus, by the continuity of function $H_1(t,x,y^1,y^2,z^1,z^2)$ w.r.t $(y^1,y^2,z^1,z^2)$, we obtain that
\begin{align*}
H_1(t,\xto,&Y_t^{1n_k},Y_t^{2n_k},Z_t^{1n_k},Z_t^{2n_k})\\
&\xrightarrow{k\rightarrow\infty} H_1(t,\xto,Y_t^{1},Y_t^{2},Z_t^{1},Z_t^{2})\quad \ dt\otimes d\bp-a.e.
\end{align*}
In addition, considering that
\[
\sup_{k\geq 1}|H_1(t,\xto,Y_t^{1n_k},Y_t^{2n_k},Z_t^{1n_k},Z_t^{2n_k})| \in \mathcal{H}_{T}^q(\R)\text{ for } 1<q<2,
\]
which follows from (\ref{eq:F1nlp}). Finally, by the dominated convergence theorem, one can get that,
\begin{align*}
H_1(t,\xto,&Y_t^{1n_k},Y_t^{2n_k},Z_t^{1n_k},Z_t^{2n_k})\\
&\xrightarrow{k\rightarrow\infty} H_1(t,\xto,Y_t^{1},Y_t^{2},Z_t^{1},Z_t^{2}) \text{ in } \mathcal{H}_T^q,
\end{align*}
which yields to the convergence of $I_3^n$ in (\ref{G}) to $0$.

It follows that the sequence $(H_{1n}(t,\xto,Y_t^{1n},Y_t^{2n},Z_t^{1n},Z_t^{2n})_{0\leq t\leq T})_{n\geq 1}$ converges to 
 $(H_1(t,\xto,Y_t^{1},Y_t^{2},Z_t^{1},Z_t^{2}))_{0\leq t\leq T}$ in $\mathcal{L}^1([0,T]\times \Omega, dt\otimes d\bp)$ and then $F_1(t,\xto)=H_1(t,\xto,Y_t^{1},Y_t^{2},Z_t^{1},Z_t^{2}), dt\otimes d\bp$-a.e. In the same way, we have, $F_2(t, X_t^{0,x})=H_2(t,\xto,Y_t^{1},Y_t^{2},Z_t^{1},Z_t^{2})$, $dt\otimes d\bp$-a.e. Thus, the processes $(Y^i, Z^i)$, $i=1,2$ are the solutions of the backward equation (\ref{eq:mainbsdeinth}).

\end{proof}


\begin{backmatter}

\end{backmatter}

\begin{thebibliography}{00}

\bibitem{bahlali2001backward} K. Bahlali, {\em Backward stochastic differential equations with locally Lipschitz coefficient}, Comptes Rendus de l'Acad{\'e}mie des Sciences-Series I-Mathematics, 333.5, (2001), pp. 481-486.

\bibitem{bahlali2002existence} K. Bahlali, {\em Existence and uniqueness of solutions for BSDEs with locally Lipschitz coefficient}, Electron. Comm. Probab, 7.17, (2002), pp. 169-179.

\bibitem{bismut} J-M. Bismut, {\em Conjugate convex functions in optimal stochastic control}, Journal of Mathematical Analysis and Applications, 44.2, (1973), pp. 384-404.

\bibitem{briand2000bsdes} P. Briand and R. Carmona, {\em {BSDE}s with polynomial growth generators}, International Journal of Stochastic Analysis, 13.3, (2000), pp. 207-238.

\bibitem{briand2007one} P. Briand R. J-P. Lepeltier J. San Martin et al. {One-dimensional backward stochastic differential equations whose coefficient is monotonic in y and non-Lipschitz in z}, Bernoulli, 13.1, (2007), pp. 80-91.

\bibitem{briand2006bsde} P. Briand and Y. Hu, {\em {BSDE} with quadratic growth and unbounded terminal value}, Probability Theory and Related Fields, 136.4, (2006), pp. 604-618.


\bibitem{carmona2009indifference} R. Carmona, {\em Indifference pricing: theory and applications}, Princeton University Press, (2009).

\bibitem{de} D. Duffie, L.G. Epstein, {\em Stochastic differential utility}, Econometrica: Journal of the Econometric Society, (1992), pp. 353-394.
                                                                                               

\bibitem{el1997backward} N. El-Karoui and L. Mazliak, {\em Backward stochastic differential equations}, vol. 364, CRC Press, (1997).

\bibitem{el2008backward} N. El-Karoui, S. Hamadene and A. Matoussi, {\em Backward stochastic differential equations and applications}, (2008).

\bibitem{karoui} N. El-Karoui, S. Peng and M.C. Quenez, {\em Backward stochastic differential equations in finance}, Mathematical finance,  7.1, (1997), pp. 1-71.

\bibitem{kobylanski} M. kobylanski, {\em Backward stochastic differential equations and partial differential equations with quadratic growth}, The Annals of Probability, 28.2, (2000), pp. 558-602.


\bibitem{hamadene1996equations}  S. Hamad\`ene, {\em Equations diff{\'e}rentielles stochastiques r{\'e}trogrades: Le cas localement lipschitzien}, Annales de l'institut Henri Poincar{\'e} (B) Probabilit{\'e}s et Statistiques, 32.5, (1996), pp. 645-659. 


\bibitem{hamadene1997} S. Hamad\`ene, J.-P. Lepeltier and S. Peng,{\em \  {BSDE}s with continuous coefficients and stochastic differential games}, Pitman Research Notes in Mathematics Series, (1997), pp. 115-128.

\bibitem{mu1} S. Hamad\`ene, R. Mu, {\em Existence of Nash equilibrium points for Markovian non-zero-sum stochastic differential games with unbounded coefficients}, Stochastics An International Journal of Probability and Stochastic Processes: formerly Stochastics and Stochastics
Reports, DOI: 10.1080/17442508.2014.915973, (2014), pp. 1-27.


\bibitem {Haussmann1986} U.G. Haussmann, {\em A stochastic maximum principle for optimal control of diffusions},  John Wiley \& Sons, Inc. (1986).

\bibitem{jia2008uniqueness} G. Jia, {\em A uniqueness theorem for the solution of backward stochastic differential equations}, Comptes Rendus Mathematique, 346.7, (2008), pp. 439-444.


\bibitem{lepeltier1997backward} J-P. Lepeltier and J. San Martin, {\em Backward stochastic differential equations with continuous coefficient}, Statistics \& Probability Letters, 32.4, (1997), pp. 425-430. 


\bibitem{mao} X. Mao, {\em Adapted solutions of backward stochastic differential equations with non-lipschitz coefficients}, Stochastic Processes and Their Applications, 58.2, (1995), pp. 281-292.




\bibitem {KS}I. Karatzas and S.E. Shreve, {\em Brownian Motion and Stochastic Calculus - 2nd ed.}, Springer Verlag, (1991).

\bibitem{peng1990} E. Pardoux and S. Peng, {\em Adapted solution of a backward stochastic differential equation}, Systems \& Control Letters, 14.1, (1990), pp. 55-61.

\bibitem{peng2011} S. Peng, {\em Backward stochastic differential equation, nonlinear expectation and their applications}, Proceedings of the International Congress of Mathematicians, (2011), pp. 393-432.




\bibitem{yong1999stochastic} J. Yong and X.Y. Zhou, {\em Stochastic controls: Hamiltonian systems and {HJB} equations}, vol. 43, Springer, (1999).

\end{thebibliography}
\end{document}